\newcommand{\E}{{\mathbb E}}
\newcommand{\bS}{{\mathbb S}}
\renewcommand{\H}{{\mathbb H}}
\newcommand{\1}{{ \mathbf  1}}
\renewcommand{\P}{{\mathbb P}}
\newcommand{\R}{{\mathbb R}}
\renewcommand{\Re}{\mathrm{Re}\,}
\renewcommand{\Im}{\mathrm{Im}\,}
\def\br#1{\left(#1\right)}
\def\brb#1{\left[#1\right]}
\newcommand{\SLE}{\mathrm{SLE}}
\definecolor{red}{rgb}{1,0,0}
\theoremstyle{remark}
\newtheorem{rem}{Remark}
\theoremstyle{remark}
\theoremstyle{definition}
\theoremstyle{plain}
\newtheorem{theorem}{Theorem}[section]
\newtheorem{lemma}[theorem]{Lemma} %[section]
\newtheorem{proposition}[theorem]{Proposition} %[section]
\date{}
\title{A proof of factorization formula for critical percolation}
\author[D. Beliaev]{Dmitri Beliaev}
\email{dbeliaev@math.princeton.edu}
\address{Princeton University, Department of Mathematics, Fine Hall Washington Rd, Princeton NJ 08544}
\author[K. Izyurov]{Konstantin Izyurov}
\begin{document}
\begin{abstract}

We give mathematical proofs to a number of statements which appeared in the series of papers by Kleban, Simmons and Ziff \cite{SKZ,SZK09} where they computed the probabilities of several percolation events. 

\end{abstract}

\maketitle

\section{Introduction}

Two-dimensional critical percolation has been extensively studied in the last two decades, both from a Conformal Field Theory point of view and by means of Schramm-Loewner evolution. The first approach provides a way to obtain explicit formulas for correlation functions (connection probabilities), whereas the second one, in particular, gives a key to rigorous proof of such results. The proof of Cardy's formula was the major step in Smirnov's proof of conformal invariance of percolation on the triangular lattice \cite{Smirnov01}. Lawler, Schramm and Werner \cite{LSW02} proved an asymptotical formula for the probability of an interval on the boundary of the domain to be connected to a small neighborhood of a bulk point. Later, Watts' formula for the simultaneous occurence of up-down and left-right crossing in a rectangle was proven \cite{DubedatWatts}, \cite{SheffieldWatts}. A formula for the expected number of clusters separating two sides of a rectangle was established in \cite{HonSmiPerc}, although the proof did not involve SLE techniques. We refer the reader to \cite{Lawler05} for the introduction to the SLE topic.

In \cite{SZK09} Simmons et al. computed the density of the probability that there is a percolation cluster which connects a boundary interval with a boundary point and an interior point. In the present paper we provide a precise formulation and a proof of this result. This also leads to a rigorous proof of the exact factorization formula for certain four-point correlation function proposed by the same authors.

Throughout the paper by percolation we mean the scaling limit of the critical percolation on the triangular lattice. We work only with percolation interfaces (boundaries of clusters) which by \cite{Smirnov01} have the law of $SLE(6)$. We remark here that this is only proven for the cite percolation on triangular lattice, but is conjectured to be true for other lattices as well. 

%In other words, the small parameters (small neighborhoods of points) of the present paper are much bigger than the mesh size, meaning that we first send the mesh size to zero for fixed values of those parameters. 

Let $u_1<u_2<u_3$ be three points on the real line and $w$ be a point in the upper half-plane $\H$. We are going to compute the asymptotic behavior of the probability that the percolation cluster attached to a boundary interval $[u_1,u_2]$ approaches  small neighborhood of $u_3$ and $w$. We will understand those neighborhoods in the following sense which is well-suited for SLE computations. Let $K$ be the percolation cluster attached to $[u_1,u_2]$, by $r(w)$ we denote the conformal radius of the component of $\H \setminus K$ which contains $w$ as seen from $w$. For a boundary point $u_3$, we set $r(u_3):=dist(K\cap\R, u_3)$.

Denote by $\widetilde{F}(u_1,u_2,u_3,w,s_1,s_2)$ the probability that $r(w)<e^{-s_1}$ and $r(u_3)<e^{-s_2}$. We can write 
$$
\widetilde{F}(u_1,u_2,u_3,w,s_1,s_2)=C(u_1,u_2,u_3,s_2)F(u_1,u_2,u_3,w,s_1,s_2),
$$
where $C(u_1,u_2,u_3,s_2)$ is the probability that $r(u_3)<e^{-s_2}$ and $F(u_1,u_2,u_3,w,s_1,s_2)$ is the conditional probability that $r(w)<e^{-s_1}$ given that $r(u_3)<e^{-s_2}$ We also introduce the following notation:

\begin{equation}
\label{defC}
C(u_1,u_2,u_3):= \lim\limits_{t\rightarrow \infty} e^{\frac13 s}C(u_1,u_2,u_3,s);
\end{equation}
\begin{equation}
\label{defF1}
F(u_1,u_2,u_3,w,s_1):= \lim\limits_{s_2\rightarrow \infty} F(u_1,u_2,u_3,w,s_1,s_2);
\end{equation}
\begin{equation}
\label{defF2}
F(u_1,u_2,u_3,w):= \lim\limits_{s_1\rightarrow \infty} e^{\frac{5}{48} s_1}F(u_1,u_2,u_3,w,s_1);
\end{equation}

The existence of these limits will be clear from what follows. Denote $\bS=\{0<\Re z<\infty, 0<\Im z<1\}$. The main result of this paper is the following:

\begin{theorem}
\label{ThmF}
One has
\begin{equation}
\label{EqF}
F(u_1,u_2,u_3,w)=K_5|\psi'_{u_1,u_2,u_3}(w)|^{5/48}G(\Re\psi_{u_1,u_2,u_3}(w),\Im\psi_{u_1,u_2,u_3}(w)),
\end{equation}
where  $\psi_{u_1,u_2,u_3}$ is the conformal map that transforms $\{\H, u_1,u_2,u_3\}$ to $\{\bS, i,0,\infty\}$, $K_5$ is a constant given by (\ref{valueK}) and $G$ is an explicit function given by (\ref{F})
\end{theorem}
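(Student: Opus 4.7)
The plan is to combine the Loewner/SLE description of the cluster interface with two successive Doob $h$-transforms that extract the asymptotics in $s_2$ and $s_1$, and then to identify the resulting limit via a PDE on $\bS$. By \cite{Smirnov01} the cluster $K$ attached to $[u_1,u_2]$ is bounded by two $\SLE(6)$ arcs; equivalently, its outer boundary can be realized as a single chordal $\SLE(6,\rho)$ curve from $u_1$ started with appropriate force point at $u_2$. Throughout, I would work in the canonical domain $\bS$ obtained by applying $\psi_{u_1,u_2,u_3}$: conformal invariance of $\SLE(6)$ reduces the theorem to computing a single function $G(x,y)$ of two real variables, with the factor $|\psi'_{u_1,u_2,u_3}(w)|^{5/48}$ arising automatically from the $\SLE(6)$ one-arm bulk exponent $5/48$ and Koebe's distortion theorem (which controls the passage between the conformal radius $r(w)$ and its image $r(\psi(w))/|\psi'(w)|$ up to multiplicative errors that vanish as $s_1\to\infty$).

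I would treat the two limits in \eqref{defF1} and \eqref{defF2} separately. For the $s_2\to\infty$ limit, $C(u_1,u_2,u_3,s_2)$ is the probability that the $\SLE(6)$ interface comes $e^{-s_2}$-close to the boundary point $u_3$; its decay like $e^{-s_2/3}$ is the standard $\SLE(6)$ boundary one-arm estimate, so \eqref{defC} exists and equals an explicit $\SLE(6)$ martingale observable (essentially $(g_t'(u_3))^{1/3}$ evaluated initially, in the spirit of \cite{LSW02}). Dividing by this martingale performs a Doob transform and weights the $\SLE(6)$ driving process by a drift term, producing an $\SLE(6,-2)$-type process conditioned to be absorbed at $u_3$; under this conditioned law, $F(u_1,u_2,u_3,w,s_1)$ is the unconditioned probability of the bulk one-arm event $\{r(w)<e^{-s_1}\}$. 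Quasi-multiplicativity and arm separation for $\SLE(6)$ then give \eqref{defF2} with rate $5/48$.

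To identify $F(u_1,u_2,u_3,w)$ I would use the fact that, along the conditioned Loewner flow in $\bS$, the quantity
\[
M_t \,=\, e^{\frac{5}{48}s_1}\,F\bigl(g_t(u_1),g_t(u_2),g_t(u_3),g_t(w),s_1-\log|g_t'(w)|\bigr)
\]
is a local martingale. Applying Itô's formula and sending $s_1\to\infty$ yields a linear second-order elliptic PDE on $\bS$ for $G$, namely the BPZ equation at $\kappa=6$ with the drift terms coming from the marked points $i,0,\infty$ and $w$, together with the dilation generator accounting for the $5/48$ weight at $w$. The boundary conditions are dictated by geometry: $G$ vanishes at the lower boundary $\Im w=0$ with $\Re w>0$ (where the cluster would have to cross through $u_3$ to reach $w$) and on the upper boundary $\Im w=1$ away from $i$, blows up with the correct power at $w\to i$, and matches the known one-arm asymptotics as $\Re w\to\pm\infty$. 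I would check that the explicit candidate of \cite{SKZ,SZK09} (to be displayed as \eqref{F}) solves the PDE with these boundary conditions, and then invoke a uniqueness argument for the boundary value problem; the overall constant $K_5$ in \eqref{valueK} is pinned down by matching to the boundary-to-interior asymptotic of \cite{LSW02} in a degenerate configuration.

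The main obstacle is the rigorous justification of the two successive limits and their commutativity. The Doob-transform step in $s_2$ requires uniform control of the $\SLE(6)$ curve near $u_3$ (separation of arms and quasi-multiplicativity up to the $\SLE$ stopping times used to define $r(u_3)$), and conversion between the boundary conformal-radius-type quantity $r(u_3)=\mathrm{dist}(K\cap\R,u_3)$ and the natural $\SLE$-martingale observable $g_t'(u_3)$. The subsequent $s_1\to\infty$ limit under the conditioned measure requires analogous bulk arm estimates together with a Koebe-style comparison. Once these are in place, the PDE and its boundary conditions follow routinely from Itô's formula; the uniqueness step for the boundary value problem on $\bS$ is expected to be straightforward given the prescribed growth at the corners $0, i, \infty$.
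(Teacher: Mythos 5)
Your overall skeleton (condition on the approach to $u_3$, pass to the strip, write a PDE for the candidate, match boundary behaviour) resembles the paper's, but two steps you treat as routine are in fact the crux, and as stated they fail. First, your identification step rests on ``a uniqueness argument for the boundary value problem'' on $\bS$, which you expect to be straightforward. The relevant operator is not elliptic in all variables (it is second order only in the driving coordinate, first order in $w,\bar w, u_1, u_3$, with singular coefficients at the marked points), and the paper explicitly notes that the solution of \eqref{FCequation} is \emph{not} unique. The actual proof avoids analytic uniqueness altogether: it represents $F$ probabilistically as $K_4\,\E\bigl[|\phi'(w)|^{5/48}(\sin(\pi\omega/2))^{1/3}\bigr]$ evaluated at the stopping time $\tau$ under the conditioned law, checks in three separate stopping scenarios (clockwise swallowing of $w$, counterclockwise swallowing, hitting of $u_3$) that the explicit candidate $H=K_5|\psi'|^{5/48}G$ has the same terminal values --- this is also what pins down $K_5$ --- and then concludes $F\equiv H$ by optional stopping applied to $|g_t'(w)|^{5/48}(F-H)$, with uniform integrability supplied by moment bounds on $|\phi_\tau'(w)|^{5/48}$ from $SLE_6$ derivative estimates. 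Without some substitute for this argument your plan does not close.

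Second, the terminal values above require the \emph{exact} four-point asymptotics: $F(u_1,u_2,w,s)\sim K_4 e^{-5s/48}|\phi'(w)|^{5/48}(\sin(\pi\omega/2))^{1/3}$ with an explicit constant $K_4$. What \cite{LSW02} provides is only a two-sided bound with unspecified constants $c_1,c_2$; quasi-multiplicativity and arm separation would likewise only give comparability, not the precise profile and constant that appear in \eqref{EqF} and \eqref{valueK}. The paper has to prove this refinement (Lemma \ref{lemma4pt}) via a spectral analysis of the radial generator $3\partial_{\theta\theta}+\cot(\theta/2)\partial_\theta$ with mixed Dirichlet/Neumann conditions, identifying the leading eigenfunction $(\sin(\theta/2)\sin(\theta/4))^{1/3}$ and eigenvalue $-5/144$; you would need this (or an equivalent) ingredient. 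A smaller but real inaccuracy: the conditioned process is $\SLE(6,2,-2)$ with force points at \emph{both} $u_1$ (weight $2$, not to swallow $u_1$) and $u_3$ (weight $-2$, to hit $u_3$); your ``$\SLE(6,-2)$-type process'' misses the $u_1$ force point, and the Doob observable is not $(g_t'(u_3))^{1/3}$ but the full Cardy function $C(u_1,u_2,u_3,s)$ --- the reason the conditioning is computable at all is locality at $\kappa=6$, which reduces the five-point conditioning event to Cardy's formula; the paper's remark stresses that this step genuinely fails for other $\kappa$.
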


We emphasize that the function $\widetilde{F}(u_1,u_2,u_3,w,s_1,s_2)$ is a probability in \emph{continuous} percolation, that is, the scaling limit of corresponding probability for percolation on the triangular lattice. Hence $F(u_1,u_2,u_3,w)$ is a double limit: we first take the mesh size to zero for fixed values of $s_1$, $s_2$, and then take those to zero. A more natural formulation would concern the scaling behaviour of probabilities that a particular boundary site, interiour cite and a boundary segment are touched by the same cluster. We believe the result to be true in this formulation as well, but technical difficulties seem insuperable. 

Thanks to conformal invariance of critical percolation, Theorem \ref{ThmF} has the following corollary: if $\Omega$ is a simply-connected domain with marked points $u_1$, $u_2$, $u_3$ on the boundary, $w$ inside, and $\partial \Omega$ is smooth near $u_3$, then 
$$
F_{\Omega}(u_1,u_2,u_3,w)=|\varphi'(u_3)|^{1/3}|\varphi'(w)|^{5/48}F(\varphi(u_1),\varphi(u_2),\varphi(u_3),\varphi(w)),
$$ 
where $\varphi$ is a conformal map from $\Omega$ to $\H$ and $F_{\Omega}$ is the probability for the percolation in $\Omega$ that $u_3$ is connected to $(u_1,u_2)$ and $w$, defined by the same limiting procedure as $F$.

To prove Theorem \ref{ThmF}, we proceed as follows. In the section \ref{ThreeFour}, we recall the formulae for three- and four-point functions for percolation. In the section \ref{FivePt}, we prove that $SLE(6)$ started from $u_2$ and conditioned to hit $u_3$ and not to swallow $u_1$ has the law of $SLE(6,2,-2)$. Based on this, we write down certain PDE's for $F(u_1,u_2,u_3,w)$ and $FC$, the latter being the same as obtained in \cite{SZK09}. Finally, we prove that these PDE's together with boundary conditions determine $F$ uniquely. In the section \ref{Factorization} we use our result to prove a factorization formula (\ref{EqFact}) that expresses the probability density of two points on the boundary of the domain to be connected to a bulk point in terms of pairwise connection probabilities. 

For the next two sections, we will assume that all functions are sufficiently smooth, so that the forthcoming It\^o calculus is legal. Surpassing this assumption is quite standard: after getting an explicit answer, which turns out to be a smooth function, one plugs that answer into the previous computations to prove that it is indeed a martingale, and then applies optional stopping theorem. This is explained in detail for the function $F$ in the section \ref{FivePt}.

\section{Three- and Four-point functions}
\label{ThreeFour}
The function $C(u_1,u_2,u_3,s)$ is the probability that there is a percolation crossing connecting $[u_1,u_2]$ with $[u_3-e^{-s},u_3+e^{-s}]$, or, in terms of interfaces, the probability that the $SLE(6)$ curve started from $u_2$ touches the interval $[u_3-e^{-s},u_3+e^{-s}]$ before swallowing $u_1$. Hence $C(u_1,u_2,u_3,s)$ is given by Cardy's formula. Taking the limit as $s\rightarrow \infty$, one gets the following result:

\begin{lemma} 
\label{lemma3pt}
As $s\rightarrow\infty$, one has
$$
C(u_1,u_2,u_3,s)\sim K_3 e^{-s/3}\br{\frac{u_2-u_1}{(u_3-u_2)(u_3-u_1)}}^{1/3},\quad\text{where}
$$
$$
K_3=\frac{\sqrt{\pi}}{\Gamma(\frac13)\Gamma(\frac76)}.
$$
\end{lemma}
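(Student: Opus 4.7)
The plan is to deduce the asymptotics directly from Cardy's formula. The event whose probability is $C(u_1,u_2,u_3,s)$ is precisely a percolation crossing in $\H$ between the two disjoint boundary intervals $I_1=[u_1,u_2]$ and $I_2=[u_3-e^{-s},u_3+e^{-s}]$, and by Smirnov's theorem this is given exactly by the Cardy formula
\[
P_{\text{cross}} \;=\; \frac{\Gamma(2/3)}{\Gamma(1/3)\Gamma(4/3)}\,\eta^{1/3}\,{}_2F_1\!\br{\tfrac13,\tfrac23;\tfrac43;\eta},
\]
where $\eta=\eta(u_1,u_2,u_3,s)$ is the cross-ratio of the four marked points $u_1<u_2<u_3-e^{-s}<u_3+e^{-s}$. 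So the first step is just to write down $\eta$ and compute its behaviour.

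Second, I would expand the cross-ratio as $s\to\infty$. With $x_3=u_3-e^{-s}$, $x_4=u_3+e^{-s}$,
\[
\eta = \frac{(u_2-u_1)(x_4-x_3)}{(x_3-u_1)(x_4-u_2)} = \frac{2e^{-s}(u_2-u_1)}{(u_3-u_1)(u_3-u_2)}\br{1+O(e^{-s})},
\]
so $\eta\to 0$ and in particular $\eta^{1/3}\sim 2^{1/3}e^{-s/3}\br{\frac{u_2-u_1}{(u_3-u_1)(u_3-u_2)}}^{1/3}$. The hypergeometric factor ${}_2F_1(\tfrac13,\tfrac23;\tfrac43;\eta)$ is analytic at $0$ with value $1$, so it contributes only $1+O(e^{-s})$. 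This already gives the stated asymptotic shape $K_3 e^{-s/3}\br{\frac{u_2-u_1}{(u_3-u_1)(u_3-u_2)}}^{1/3}$ with
\[
K_3 \;=\; \frac{2^{1/3}\,\Gamma(2/3)}{\Gamma(1/3)\Gamma(4/3)}.
\]

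The last step is to simplify this constant into the form stated. I would use $\Gamma(4/3)=\tfrac13\Gamma(1/3)$ and then the Legendre duplication formula $\Gamma(x)\Gamma(x+\tfrac12)=2^{1-2x}\sqrt{\pi}\,\Gamma(2x)$ applied at $x=1/6$, which yields $\Gamma(1/6)\Gamma(2/3)=2^{2/3}\sqrt{\pi}\,\Gamma(1/3)$, together with $\Gamma(7/6)=\tfrac16\Gamma(1/6)$. Combining these identities turns $\frac{2^{1/3}\Gamma(2/3)}{\Gamma(1/3)\Gamma(4/3)}$ into $\frac{\sqrt{\pi}}{\Gamma(1/3)\Gamma(7/6)}$, matching the expression for $K_3$ in the lemma.

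The only minor obstacle is keeping track of which Cardy formula convention is in force: one must make sure the normalisation corresponds to the crossing between $I_1$ and $I_2$ (not the complementary pair) and equivalently to the $\SLE(6)$ from $u_2$ hitting $I_2$ before swallowing $u_1$. Once the convention is fixed, the computation is purely a matter of expanding the cross-ratio and collecting gamma functions; the error terms from the expansion of $\eta$ and from ${}_2F_1(\cdot,\cdot;\cdot;\eta)=1+O(\eta)$ are uniform in $(u_1,u_2,u_3)$ on compact subsets of $\brs{u_1<u_2<u_3}$, which is all we need.
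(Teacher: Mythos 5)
Your proposal is correct and follows essentially the same route as the paper: both deduce the asymptotics from Cardy's formula for the crossing between $[u_1,u_2]$ and the shrinking interval around $u_3$ (the paper simply cites the asymptotic form from Lawler--Schramm--Werner in a special configuration and conformally maps, while you expand the cross-ratio and the hypergeometric factor directly). Your gamma-function simplification, via $\Gamma(1/6)\Gamma(2/3)=2^{2/3}\sqrt{\pi}\,\Gamma(1/3)$, indeed yields $K_3=\sqrt{\pi}/\bigl(\Gamma(\tfrac13)\Gamma(\tfrac76)\bigr)$, matching the stated constant.
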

Hereinafter the notation $A \sim B$ means that the ratio of the two sides tends to one.
\begin{proof}
Theorem 3.2 in \cite{LSW1} provides this lemma when $u_1,u_2,u_3,e^{-s}$=$1,\infty,x/2,x/2$. The statement of the lemma is obtained by conformal mapping to this case.
\end{proof}

\begin{rem}
The rate of decay $e^{-s/3}$ of $C$ is consistent with the value $1/3$ of the boundary one-arm exponent. Note that our event is similar to the one-arm event, the difference is that we condition cluster to hit the interval $[u_1,u_2]$.
% and that we consider conformal inradius). 
\end{rem}

\begin{rem}
One can check by direct computation that the function $C(u_1,u_2,u_3)$ from lemma \ref{lemma3pt} satisfies the following equation: 
\begin{equation}
\label{Cardy}
\br{\frac{2}{u_1-u_2}\partial_{u_1}+\frac{2}{u_3-u_2}\partial_{u_3}+3\partial^2_{u_2}-\frac{2}{3(u_3-u_2)^2}}C=0.
\end{equation}
This equation means that $C$ is a martingale which is conformally $1/3$-covariant at $u_3$ and invariant at $u_1$ and $u_2$, that is, $|g'_t(u_3)|^{1/3}C(g_t(u_1),B_{6t},g_t(u_3))$ is a martingale, where $g_t$ is the Loewner map driven by the Brownian motion $B_{6t}$ started from $u_2$.
\end{rem}

Now we would like to prove an analog of this lemma when the point $u_3$ is inside the domain. Let  $F(u_1,u_2,w,s)$ be the probability that $r(w)<e^{-s}$.
\begin{lemma} 
\label{lemma4pt}
As $s\rightarrow\infty$, one has
\begin{equation}
\label{FourPt}
F(u_1,u_2,w,s)\sim K_4 e^{-\frac{5}{48}s}|\phi'(w)|^{5/48}\br{\sin(\pi \omega/2)}^{1/3},
\end{equation}
where $\omega$ is the harmonic measure of $(u_1,u_2)$ seen from $w$; $\phi$ is a conformal map from $\H$ to the unit disc, $\phi(w)=0$, and
$$
K_4=\frac{18}{5\pi}.
$$
\end{lemma}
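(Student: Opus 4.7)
The plan is to invoke the asymptotic formula of Lawler, Schramm and Werner \cite{LSW02}, which gives precisely the leading behaviour of the probability that a boundary interval is connected through a percolation cluster (equivalently, through the $\SLE(6)$ exploration) to a small neighbourhood of an interior point $w$. Their exponent $5/48$ is the bulk one-arm exponent. Our quantity $F(u_1,u_2,w,s)$ is phrased in terms of the conformal radius $r(w)$ rather than Euclidean distance, but by Koebe's $1/4$-theorem the two differ only by a bounded multiplicative factor, so the exponent and the conformal covariance structure carry over directly.

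\textbf{Reduction by conformal covariance.} Under a conformal map $\phi$ sending $(\H,u_1,u_2,w)$ to another such configuration, the limiting probability transforms with weight $|\phi'(w)|^{5/48}$ at the interior point and weight $0$ at the boundary points. Extracting this factor leaves a function of purely conformally invariant data, namely the harmonic measure $\omega$ of $(u_1,u_2)$ seen from $w$. It therefore suffices to work in the canonical picture $\phi\colon\H\to\D$, $\phi(w)=0$, in which the arc $\phi((u_1,u_2))$ has angular length $\pi\omega$, and to identify the universal function $h(\omega)$ that arises there.

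\textbf{Pinning down $h(\omega)$.} Two limits constrain $h$. As $\omega\to 0^+$ the arc collapses to a point and the event becomes a boundary-to-bulk two-arm event, forcing $h(\omega)\sim c\,\omega^{1/3}$ with $1/3$ the boundary one-arm exponent; this matches $\sin(\pi\omega/2)^{1/3}\sim (\pi\omega/2)^{1/3}$ as $\omega\to 0$. As $\omega\to 1^-$ the arc covers essentially the whole boundary and one recovers the unconditional bulk one-arm estimate, giving $h(1)=1$. To make this rigorous one writes the PDE satisfied by $F$ from the fact that an appropriately weighted version of $F(g_t(u_1),B_{6t},g_t(w),\ldots)$ is a martingale under $\SLE(6)$ started at $u_2$; the ansatz $F=K_4\,|\phi'(w)|^{5/48}h(\omega)$ reduces this to a linear second-order ODE in $\omega$ whose unique solution compatible with the two boundary asymptotics is $(\sin(\pi\omega/2))^{1/3}$.

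\textbf{Constant and main obstacle.} The value $K_4=18/(5\pi)$ is fixed by matching the $\omega\to 1$ limit with the absolute normalisation of the bulk one-arm probability in \cite{LSW02}. The analytic substance---existence of the limit and the exponent $5/48$---is the content of \cite{LSW02}; the remaining work consists of the translation to the conformal-radius formulation and the explicit identification of the factor $(\sin(\pi\omega/2))^{1/3}$ together with its constant. This is a one-variable ODE computation and should not present serious difficulty.
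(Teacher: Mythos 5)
There is a genuine gap, and it sits exactly where the lemma's real content lies. You attribute to \cite{LSW02} ``precisely the leading behaviour'' and later ``the absolute normalisation of the bulk one-arm probability,'' but that paper only gives two-sided bounds with unspecified constants: in the notation of the present paper, $c_1(\sin(\theta/4))^{1/3}e^{-5s/48}\leq h(\theta,s)\leq c_2(\sin(\theta/4))^{1/3}e^{-5s/48}$, plus the PDE $\partial_s h=(3\partial_{\theta\theta}+\cot(\theta/2)\partial_\theta)h$. In particular the existence of the limit $\lim_{s\to\infty}e^{5s/48}F(u_1,u_2,w,s)$ --- i.e.\ a true asymptotic equivalence rather than an estimate up to constants --- is \emph{not} in \cite{LSW02}, and neither is any absolute constant you could match against to get $K_4=18/(5\pi)$. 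Your plan to ``fix $K_4$ by matching the $\omega\to1$ limit with the normalisation in \cite{LSW02}'' is therefore circular: the sharp normalisation is precisely what has to be proved. The paper closes this gap by spectral theory for the radial SLE(6) generator: after the substitution $f=(\sin(\theta/2))^{1/3}h$ one gets a Schr\"odinger-type operator $\partial_{\theta\theta}+V$, one constructs the self-adjoint extension encoding Dirichlet data at $0$ and Neumann data at $2\pi$, proves discreteness of the spectrum via Hardy's inequality, identifies the ground state $(\sin(\theta/2)\sin(\theta/4))^{1/3}$ with eigenvalue $-5/144$, shows convergence of the eigenfunction expansion (upgraded from $L_2$ to uniform via concavity of regularized solutions), and obtains $K_4$ by projecting the initial condition $h(\cdot,0)\equiv1$ onto the ground state. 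None of this is replaced by your ODE ansatz.

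A secondary issue: your ``pinning down $h(\omega)$'' step presupposes that $F(u_1,u_2,w,s)\,e^{5s/48}$ converges and that the limit has the product form $K_4|\phi'(w)|^{5/48}h(\omega)$, and then identifies $h$ from a second-order ODE with two boundary asymptotics. Even granting smoothness, the martingale/It\^o computation only tells you the limit (if it exists) is annihilated by a certain operator; without the convergence statement you have nothing to apply it to, and without a uniqueness argument for the boundary-value problem (which in the paper's scheme is exactly the statement that the ground state is simple and dominates) the asymptotics at $\omega\to0$ and $\omega\to1$ do not by themselves select the solution together with its constant. Also, your remark that conformal radius and Euclidean distance ``differ only by a bounded multiplicative factor'' is fine for exponents but is exactly the kind of step that destroys sharp constants; the lemma is a statement with an explicit constant, so every reduction must preserve asymptotic equivalence, not just order of magnitude.
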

\begin{proof}
Lawler, Schramm and Werner \cite{LSW02} proved a slightly weaker statement. Here we refine their result using spectral theory techniques. 

Using locality of SLE(6), we reformulate the problem in terms of radial SLE(6). Let $h(\theta,s)$ denote the probability that the conformal radius of the complement of percolation cluster in the unit disc  attached to a boundary arc of length $\theta$ is less than $e^{-s}$. Then it is proven in \cite{LSW02} that 
\begin{equation}
\label{fourPtweak}
c_1(\sin(\theta/4))^{1/3}e^{-5/48 s}\leq h(\theta,s)\leq c_2(\sin(\theta/4))^{1/3}e^{-5/48 s}
\end{equation}
and that the function $h$ satisfies the PDE:
\begin{equation}
\label{hPDE}
\partial_s h = (3\partial_{\theta\theta}+\cot(\theta/2)\partial_\theta)h
\end{equation}
with boundary conditions $h(\theta,0)\equiv 1$, $h(0,s)\equiv 0$, and (a weak form of) Neumann boundary condition ``$\partial_\theta h(\theta,s)\equiv 0$'' at $\theta=2\pi$. Write $f(\theta,s):=\left(\sin(\theta/2)\right)^{1/3}h(\theta, s)$. Then (\ref{hPDE}) implies
\begin{equation}
\label{fPDE}
\partial_s f = (3\partial_{\theta\theta}+3V)f,
\end{equation}
where $V(\theta)=\frac{1}{12}+\frac{1}{18}\cot^2(\frac{\theta}{2})$. Our goal is to define a self-adjoint operator corresponding to the right-hand side of (\ref{fPDE}) and to prove that it has a discrete spectrum, hence the function $f$ for large $s$ behaves like the leading eigenfunction. The proof below (and perhaps the result itself) is very standard in spectral theory. First of all, we consider the operator $\partial_{\theta\theta}+V$ acting on smooth functions with support inside $(0,2\pi)$ and write explicitly the domain $D(\Lambda_0)$ of the $L_2$-closure of this operator. Self-adjoint extensions of this closure are obtained by adding two-dimensional subspaces to $D(\Lambda_0)$. Different extensions correspond to different boundary conditions, and we consider two such extensions  $\Lambda$ (which corresponds to the boundary conditions we are interested in) and  $\Lambda_1$ (which corresponds to Dirichlet boundary conditions for $h$ on both sides of $(0,2\pi)$). It is a well-known fact that the spectral properties of $\Lambda$ and $\Lambda_1$ are similar, so it is sufficient to prove that $\Lambda_1$ has a discrete spectrum. We reformulate this property in terms of quadratic forms, and then use Hardy's inequality to prove that the form corresponding to $\Lambda_1$ is comparable to the form corresponding to $\partial_{\theta\theta}$, which is well-known to have a discrete spectrum.

Set $\Lambda_0:=(\partial_{\theta\theta}+V)$ with 
\begin{eqnarray*}
D(\Lambda_0)&:=& 
\\
&=& \{f\in AC^1([0,2\pi]):\Lambda_0 f\in L_2(0,2\pi),f(0)=f(2\pi)=f'(0)=f'(2\pi)=0\}
\\
&= &\{f\in AC^1([0,2\pi]):f''\in L_2(0,2\pi),f(0)=f(2\pi)=f'(0)=f'(2\pi)=0\}
\end{eqnarray*}

The operator $\Lambda_0$ is closed in $L_2$. Let $\xi$ be a smooth decreasing function with $\xi\equiv 1$ on $[0,\pi/3)$ and $\xi\equiv 0$ on $(5\pi/3,2\pi]$. We define the functions 
$$
w_1(x):=(2\pi-x)^{1/3}\xi(2\pi-x);\quad w_2(x):=x^{2/3}\xi(x);\quad w_3(x):=(2\pi - x)^{2/3}\xi(2\pi-x)
$$
and the extensions $\Lambda$, $\Lambda_1$ of $\Lambda_0$ with
$$
D(\Lambda)=D(\Lambda_0)+\langle w_1\rangle +\langle w_2\rangle;\quad D(\Lambda_1)=D(\Lambda_0)+\langle w_2\rangle+\langle w_3\rangle
$$
By Theorems 2 and 4, \S 18 of \cite{Naimark}, $\Lambda$ and $\Lambda_1$ are self-adjoint extensions of $\Lambda_0$. Note that $\Lambda_1$ corresponds to Dirichlet boundary conditions for the initial PDE, whereas $\Lambda$ corresponds to Dirichlet boundary condition at $0$ and Neumann boundary condition at $2\pi$.

We now prove that $\Lambda_1$ has discrete spectrum. It is sufficient to show that for any $A\in \R$, any subspace $L\subset D(\Lambda_1)$ such that all $g\in L$ obey the inequality
\begin{equation}
\label{Spec1}
A(g,g)<(\Lambda_1 g;g)
\end{equation}
is finitely dimensional. The potential $V(\theta)$ has singularities at $0$, $2\pi$ of order $\frac{2}{9}\theta^{-2}$ and $\frac{2}{9}(2\pi-\theta)^{-2}$ correspondingly. By Hardy's inequality, for any $\alpha>\frac89$ there exists a constant $B$ such that
\begin{equation}
\label{Forms}
(Vg,g)\leq \alpha (g',g')+B(g,g).
\end{equation}
for any $g\in W_2^1((0,2\pi))$ such that $g(0)=g(2\pi)=0$, and hence for any $g\in D(\Lambda_1)$. Therefore (\ref{Spec1}) implies 
$$
\frac{A-B}{1-\alpha}(g,g)\leq-(g',g').
$$
Since the spectrum of $g\mapsto g''$ defined on $g\in W_2^1((0,2\pi))$, $g(0)=g(2\pi)=0$ is discrete and negative, this cone can only contain finitely dimensional subspaces. 

By Theorem 2, \S 19 of \cite{Naimark}, this implies that the spectrum of $\Lambda$ is also discrete. Standard arguments now imply that all eigenvalues of $\Lambda$ are simple, and that the positive eigenfunction $\psi_0:=(\sin(\theta/2)\sin(\theta/4))^{1/3}$ of $\Lambda$ corresponds to the largest eigenvalue $-5/144$. It has been shown in \cite{LSW02} that for $\varepsilon =1$ and any $s>0$,
$$
h^{\varepsilon}(\theta,s):=\int_0^\varepsilon h(\theta,s+s')ds'
$$
satisfies Neumann boundary condition ($\partial_{\theta}h^{\varepsilon}(2\pi,s)=0$) at $2\pi$ and Dirichlet boundary conditions at $0$. First, we observe that the same is true (and with the same proof) for any $\varepsilon>0$. Second, the functions $h^{\varepsilon}$ are monotone decreasing in $s$ and increasing in $\theta$. Hence (\ref{hPDE}) implies that $h^{\varepsilon}(\cdot,s)$ is concave for any $s$. By some elementary calculus, one deduces from this and (\ref{fourPtweak}) that $\partial_\theta h^{\varepsilon}\leq C\theta^{-2/3}$ as $\theta\rightarrow 0$. Observe that also  $\lim_{\theta\rightarrow 2\pi} \partial_{\theta}h=0$.

We now consider $f^{\varepsilon}(\theta,s)=(\sin(\theta/2))^{1/3}h^{\varepsilon}(\theta,s)$ and write
$$
f^{\varepsilon}(\cdot,s)=\sum c_k(s)\psi_k(\cdot), 
$$
where $\psi_k$ are eigenfunctions of $\Lambda$ and the equality holds in the sence of $L_2$. By multiplying (\ref{fPDE}) by $\psi_k$ and integrating by parts, taking into account the boundary conditions for $h$ derived above, we obtain
$$
\partial_t c_k(s)=\lambda_kc_k(s),
$$
and hence 
$$
f^{\varepsilon}(\cdot,s)e^{5/48s}\stackrel{L_2}{\longrightarrow}C_\varepsilon \psi_0 \quad \text{as} \quad s\rightarrow\infty
$$
Since the functions $h_\varepsilon(\cdot,s)$ are concave for all $s$, this actually implies uniform convergence. Corresponding result for $h$ is obtained by taking $\varepsilon\rightarrow 0$, and the constant $K_4$ is computed by projecting the initial conditions $h(\cdot,0)\equiv 1$ to the main eigenspace.
\end{proof}

\section{Five-point function}
\label{FivePt}
To compute $F$ we have to consider the law of SLE(6) curve started from $u_2$ and conditioned on the event $\mathcal{E}_s$ that $r(u_3)<e^{-s}$. Recall that in terms of interfaces, the event $\mathcal{E}_s$ means that the curve hits $(u_3-e^{-s};u_3+e^{-s})$ before swallowing $u_1$, and $\P[\mathcal{E}_s]=C(u_1,u_2,u_3,s)$. Let $u_1(t)$, $u_2(t)$, $u_3(t)$ and $r(u_3)$ be defined on a filtered probability space $(\Omega,{\mathcal F}_t,\P)$, $u_2(t)=B_{6t}$ and $u_{1,3}(t)=g_t(u_{1,3})$, where $B$ is a standard Brownian motion under $\P$ started from $u_2$ and $g_t$ is the Loewner map driven by $u_2(t)$. Let $\widetilde \P$ be the measure $\P$ conditioned on $\mathcal{E}_s$. We study the law of the driving force with respect to $\widetilde \P$. We have
$$
\frac{d \widetilde \P}{d\P}|_t=\frac{C(u_1(t),u_2(t),u_3(t),s(t))\1_{t\le T(u_3)}+\1_{t>T(u_3), r(u_3)<e^{-s}}}{C(u_1,u_2,u_3,s)}=D_t,
$$
where $T(u_3)$ is the swallowing time for $u_3$.
By Girsanov's theorem, $\widetilde B_t$ is a standard Brownian motion under $\widetilde \P$ if
$$
d\widetilde B_t=d B_t-\frac{\left\langle B_t,D_t \right\rangle}{D_t}. 
$$
Standard argument now shows that $D_t$ is a martingale, and 
$$
d D_t=\frac{\partial_{u_2} C(u_1(t),u_2(t),u_3(t),s(t))}{C(u_1(t),u_2(t),u_3(t),s(t))}\sqrt{6}dB_t
$$  
which implies that
$$
d B_t=d \widetilde B_t +\sqrt{6}\frac{\partial_{u_2} C(u_1(t),u_2(t),u_3(t),s(t))}{C(u_1(t),u_2(t),u_3(t),s(t))}dt.
$$
This proves that under condition $r(u_3)<e^{-s}$ the driving force of the Loewner evolution becomes
$$
d u_2(t)=\sqrt{6}d\widetilde B_t+6\frac{\partial_{u_2} C(u_1(t),u_2(t),u_3(t),s(t))}{C(u_1(t),u_2(t),u_3(t),s(t))}dt.
$$
The direct computation shows that in the limit, as $s\rightarrow \infty$, one has
$$
du_2=\sqrt{6}\widetilde B_t + \br{\frac{-2}{u_2-u_3}+\frac{2}{u_2-u_1}}dt
$$
which is the driving force of $\SLE(6,2,-2)$ (see \cite{DubedatSLEmart}, where these processes were introduced) started from $u_2$, $u_1$ and $u_3$. We would like to point out that the force point with $\rho=2=6-4$ conditions SLE(6) not to swallow the force point and $\rho=-2=6-8$ conditions SLE(6) to hit the force point. 
\begin{rem}
For general $\kappa$, the above argument does not work: $\mathcal{E}_s$ is now a five-point event and its probability is not expressed in terms of the four-point Cardy's formula. By conditioning first on hitting $u_3$, and then on not swallowing $u_1$, it is still possible to compute the drift in the limit $s\rightarrow\infty$; this leads to 
$$
du_2=\sqrt{\kappa}\widetilde B_t + \br{\frac{\kappa-8}{u_2-u_3}+\frac{\kappa G'(\frac{u_2-u_1}{u_3-u_1})}{(u_3-u_1)G(\frac{u_2-u_1}{u_3-u_1})}}dt,
$$
where $G(x)=\int_0^x\theta^{-\frac{4}{\kappa}}(1-\theta)^{\frac{2(6-\kappa)}{\kappa}}d\theta$. Hence for $\kappa\neq 6$ conditioning on both events is not the same as adding corresponding force points. 
\end{rem}
Let  $\tau$ be the stopping time which is the minimum of $T(w)$ -- the swallowing time for $w$ and $T(u_3)$ -- the swallowing time for $u_3$. By $\Omega$ we denote the component of the complement of the SLE trace up to time $\tau$ which contains the point $w$.

For a particular realization $\gamma([0,\tau])$ of conditioned SLE interface, the probability $F$ is the probability that the conformal radius of $\Omega$ in the complement of the cluster which is attached to the union of the left side of $\gamma$ and $[u_1,u_2]$ is at most $e^{-s}$. Let $E$ be the intersection of the left side of $\gamma$ and $[u_1,u_2]$ with the boundary of $\Omega$, and let $\phi$ be the conformal map from $\Omega$ onto the unit disc which maps $w$ to the origin. By conformal invariance, the probability of our event is the same as the probability that in the unit disc the conformal radius about the origin of the complement of the percolation cluster which is attached to $\phi(E)$ is at most $e^{-s}|\phi'(w)|$. By lemma \ref{lemma4pt}, this probability behaves as 
$$
K_4e^{-s \frac{5}{48}}|\phi'(w)|^{5/48} \br{\sin(\pi \omega/2)}^{1/3},
$$
where $\omega$ is the harmonic measure of $E$ seen from $w$.
This proves that after factoring out $e^{-s5/48}$ and passing to the limit we have 
$$
F(u_1,u_2,u_3,w)= K_4\E\brb{|\phi'(w)|^{5/48}\br{\sin(\pi \omega/2)}^{1/3}},
$$
where the expectation is taken with respect to the law of $\gamma([0,\tau])$.

\begin{lemma}
The function $F$ is a solution of the following PDE
\begin{equation}\label{F_equation}
\begin{aligned}
\Lambda_F F=\frac{5}{96}\frac{-2}{(w-u_2)^2}F+\frac{5}{96}\frac{-2}{(\bar w-u_2)^2}F+\frac{2}{u_1-u_2}\partial_{u_1}F+
\frac{2}{u_3-u_2}\partial_{u_3}F+\\
3\partial{u_2}^2 F +6\frac{\partial_{u_2}C}{C}\partial_{u_2}F+\frac{2}{w-u_2}\partial_w F+\frac{2}{\bar w-u_2}\partial_{\bar w} F=0
\end{aligned}
\end{equation}\end{lemma}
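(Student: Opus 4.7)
The plan is the standard SLE martingale-observable strategy. Under the conditioned measure $\widetilde\P$ (which is SLE(6,2,-2) in the limit, as established just above), define
$$M_t := |g_t'(w)|^{5/48}\, F\bigl(g_t(u_1),\, u_2(t),\, g_t(u_3),\, g_t(w)\bigr).$$
I would show that $M_t$ is a local martingale and then apply It\^o's formula, equating the resulting drift to zero to produce \eqref{F_equation}. The bulk conformal weight $5/48$ is forced by Lemma \ref{lemma4pt}.

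For the martingale property, fix a finite $s_1$ and note that, by the domain-Markov property of SLE(6) and conformal invariance, the conditional probability $F(u_1,u_2,u_3,w,s_1)$ viewed from the filtration $\mathcal{F}_t$ agrees with $F\bigl(g_t(u_1),u_2(t),g_t(u_3),g_t(w),\,s_1+\log|g_t'(w)|\bigr)$ (the logarithmic shift absorbs the scaling of conformal radius under $g_t$), and so is a $\widetilde\P$-martingale in $t$. Multiplying by $e^{5s_1/48}$ and sending $s_1\to\infty$ via definition \eqref{defF2} realises $M_t$ as a local martingale.

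To extract the PDE I would apply It\^o's formula using the Loewner flow $dg_t(z) = 2\,dt/(g_t(z)-u_2(t))$ for $z\in\{u_1,u_3,w,\bar w\}$, the identity $d\log g_t'(w) = -2\,dt/(g_t(w)-u_2(t))^2$, and the conditioned driver $du_2 = \sqrt{6}\,d\widetilde B_t + 6(\partial_{u_2}C/C)\,dt$ derived earlier in this section. Taking real parts of $d\log g_t'(w)$ and using $\Re z = (z+\bar z)/2$ produces the two terms $\frac{5}{96}\cdot\frac{-2}{(w-u_2)^2}F$ and $\frac{5}{96}\cdot\frac{-2}{(\bar w-u_2)^2}F$; the Loewner drifts for $u_1,u_3,w,\bar w$ produce the corresponding first-order $\partial_{u_1}F$, $\partial_{u_3}F$, $\partial_w F$, $\partial_{\bar w}F$ terms; the quadratic variation of $u_2(t)$ contributes $\tfrac{1}{2}\cdot 6\cdot\partial_{u_2}^2 F = 3\partial_{u_2}^2 F$; and the drift of $u_2(t)$ contributes $6(\partial_{u_2}C/C)\partial_{u_2}F$. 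Setting the total $dt$-coefficient to zero at $t=0$ gives exactly \eqref{F_equation}.

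The hard part will be the a priori regularity of $F$ needed to justify the It\^o computation. As flagged at the end of the introduction, this is handled by the standard bootstrap: derive the PDE under a smoothness assumption, solve it explicitly in the next section, verify directly that the explicit solution is smooth and that $M_t$ built from it is a genuine (not merely local) martingale, and then apply optional stopping to identify this explicit formula with the probabilistic $F$.
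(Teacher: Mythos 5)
Your proposal follows essentially the same route as the paper: both show that $|g_t'(w)|^{5/48}F(u_1(t),u_2(t),u_3(t),w(t))$ is a martingale under the $\SLE(6,2,-2)$ law and read off \eqref{F_equation} by setting the It\^o drift to zero, using the same driver SDE $du_2=\sqrt6\,d\widetilde B_t+6(\partial_{u_2}C/C)\,dt$, the same Loewner dynamics for $u_1,u_3,w,\bar w$, and the same deferral of regularity to the later bootstrap. The only differences are cosmetic: the paper gets the martingale property directly from the representation $F=K_4\E\brb{|\phi'(w)|^{5/48}\br{\sin(\pi\omega/2)}^{1/3}}$ (a conditional expectation of a fixed random variable) rather than via your finite-$s_1$ limit, and in your conformal-radius bookkeeping the shifted parameter should be $s_1-\log|g_t'(w)|$ rather than $s_1+\log|g_t'(w)|$ (the sign you wrote would produce $|g_t'(w)|^{-5/48}$), though the martingale $M_t$ you actually work with is the correct one.
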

\begin{proof}
As we proved before, $\gamma$ has a law of $\SLE(6;-2,2)$ trace. The driving function of this SLE is given by the solution of the SDEs
\begin{eqnarray*}
du_2(t)&=&\sqrt{6}d\widetilde B_t+\br{\frac{-2}{u_2(t)-u_3(t)}+\frac{2}{u_2(t)-u_1(t)}}dt=\sqrt{6}d\widetilde B_t+6\frac{\partial_{u_2}C}{C}dt\\
du_1(t)&=&\frac{2dt}{u_1(t)-u_2(t)}\\
du_3(t)&=&\frac{2dt}{u_3(t)-u_2(t)}.
\end{eqnarray*}
The domain Markov property implies that
$$
|g_t'(w)|^{5/48}F(u_1(t),u_2(t),u_3(t),w(t))
$$
is equal to $K_4\E[|\phi'(w)|^{5/48}\br{\sin(\pi \omega/2)}^{1/3}|{\mathcal F}_t] $ and hence is a martingale (here $w(t)=g_t(w)$ is the image of $w$ under the SLE map $g_t$). It is more convenient to write the It\^o's formula with respect to $w$ and $\bar w$ instead of $\Re w$ and $\Im w$. In these terms
$$
g_t'(w)^{5/96}g_t'(\bar w)^{5/96}F(u_1(t),u_2(t),u_3(t),w(t),\bar w(t))
$$
is a martingale. By It\^o's formula the drift is equal to zero, which is equivalent to equation \eqref{F_equation}.

\end{proof}

\begin{theorem}
\label{FCmartingale}
The product of two functions $F$ and $C$ is annihilated by the following operator
\begin{equation}
\begin{aligned}
\label{FCequation}
\Lambda=-\frac{5}{48}\frac{1}{(w-u_2)^2}-\frac{5}{48}\frac{1}{(\bar w-u_2)^2}-\frac{2}{3}\frac{1}{u_3-u_2}+\frac{2}{u_1-u_2}\partial_{u_1}+\\
\frac{2}{u_3-u_2}\partial_{u_3}+
3\partial{u_2}^2  +\frac{2}{w-u_2}\partial_w +\frac{2}{\bar w-u_2}\partial_{\bar w} 
\end{aligned}
\end{equation}
\end{theorem}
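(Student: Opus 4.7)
The plan is a direct computation by the Leibniz rule, leveraging the two equations already in hand: Cardy's equation (\ref{Cardy}) for $C$ and the SLE$(6;2,-2)$ PDE (\ref{F_equation}) for $F$. Conceptually, $\Lambda$ is the infinitesimal generator of \emph{unconditioned} SLE$(6)$ driven from $u_2$, acting on functions with conformal weight $1/3$ at $u_3$ and $5/48$ at the bulk point $w$; the identity $\Lambda(FC)=0$ expresses the fact that, once the conditioning used in the construction of $F$ is undone by multiplication by $C$, the product $FC$ is the joint (unconditioned) probability of the events $\mathcal{E}_{s_2}$ and $\{r(w)<e^{-s_1}\}$, which is an SLE$(6)$ martingale with the appropriate covariance.

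First I would expand $\Lambda(FC)$ via the product rule, term by term. Since $C$ depends only on $u_1,u_2,u_3$, the operators $\frac{2}{w-u_2}\partial_w$ and $\frac{2}{\bar w-u_2}\partial_{\bar w}$ act only on the $F$ factor. The first-order operators in $u_1$ and $u_3$ split in the usual way as $C\partial F + F\partial C$, the second-order operator $3\partial_{u_2}^2$ produces an additional cross term $6\,\partial_{u_2}F\,\partial_{u_2}C$, and the zeroth-order (multiplicative) pieces of $\Lambda$ simply multiply $FC$. I would then regroup the result according to whether $F$ or $C$ appears as a scalar coefficient.

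The $F$-coefficient piece is precisely the Cardy operator of (\ref{Cardy}) applied to $C$, so by (\ref{Cardy}) it equals $\frac{2}{3(u_3-u_2)^2}FC$; this cancels against the multiplicative contribution of $\Lambda$ at $u_3$. The $C$-coefficient piece is exactly the operator appearing in (\ref{F_equation}) applied to $F$, \emph{except} that (\ref{F_equation}) contains the additional drift $6\frac{\partial_{u_2}C}{C}\partial_{u_2}F$ and the multiplicative pieces $-\frac{5}{48}\frac{F}{(w-u_2)^2}$, $-\frac{5}{48}\frac{F}{(\bar w-u_2)^2}$, which are exactly the sign-reversed versions of the corresponding terms in $\Lambda$. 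Substituting (\ref{F_equation}) therefore produces, on multiplication by $C$, a term $-6\,\partial_{u_2}F\,\partial_{u_2}C$ that cancels the cross term from the Leibniz rule, plus two terms at $w,\bar w$ that cancel the $(w-u_2)^{-2}$ and $(\bar w-u_2)^{-2}$ multiplicative entries of $\Lambda$.

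The main obstacle is not conceptual but one of bookkeeping: one must verify that each multiplicative contribution in $\Lambda$ is accounted for either by Cardy's equation with coefficient $F$, or by (\ref{F_equation}) with coefficient $C$, and that the unique mixed term $6\,\partial_{u_2}F\,\partial_{u_2}C$ generated by $3\partial_{u_2}^2$ is precisely absorbed by the Girsanov drift in (\ref{F_equation}). Once this matching is checked, no further input is needed and the theorem follows directly from (\ref{Cardy}) and (\ref{F_equation}).
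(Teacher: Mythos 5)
Your argument is correct and is essentially the paper's own proof: expand $\Lambda(FC)$ by the Leibniz rule, use (\ref{F_equation}) to replace the $C$-weighted derivative terms acting on $F$ (which absorbs the cross term $6\,\partial_{u_2}F\,\partial_{u_2}C$ and the $(w-u_2)^{-2}$, $(\bar w-u_2)^{-2}$ multiplicative entries), and then apply Cardy's equation (\ref{Cardy}) to the $F$-weighted remainder, exactly as the paper does via the identity $\Lambda=\Lambda_F-6\frac{\partial_{u_2}C}{C}\partial_{u_2}-\frac{2}{3}\frac{1}{u_3-u_2}$. The only caveat, which applies equally to the paper's proof, is that the cancellation at $u_3$ requires reading the multiplicative term of $\Lambda$ as $-\tfrac{2}{3}(u_3-u_2)^{-2}$ so that it matches (\ref{Cardy}); the first power written in (\ref{FCequation}) appears to be a typo.
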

\begin{proof}
If we apply $\Lambda$ to $FC$ then the result will be
$$
\br{\Lambda F} C + F \br{\frac{2}{u_1-u_2}\partial_{u_1}C+
\frac{2}{u_3-u_2}\partial_{u_3}C+
3\partial{u_2}^2 C} + 6 \partial_{u_2}F \partial_{u_2}C.
$$
Using that 
$$
\Lambda=\Lambda_F-6\frac{\partial_{u_2}C}{C}\partial_{u_2}-\frac{2}{3}\frac{1}{u_3-u_2}
$$
the formula above can be rewritten as
$$
\begin{aligned}
F\br{-\frac{2}{3}\frac{1}{u_3-u_2}C+
\frac{2}{u_1-u_2}\partial_{u_1}C+
\frac{2}{u_3-u_2}\partial_{u_3}C+
3\partial{u_2}^2 C} 
\end{aligned}
$$
which is equal to zero by \eqref{Cardy}.
\end{proof}

\begin{comment}
To prove the theorem \ref{FCmartingale} (as  well as to derive equations on $C$ and $F$) we considered a percolation interface started at $u_2$. Instead of this, we can consider the interface started at $u_1$. By exactly the same argument we can prove that $FC$ is annihilated by another operator (which is the same as $\Lambda$, but $u_1$ and $u_2$ are interchanged)
\begin{equation}
\label{FCequation2}
\begin{aligned}
-\frac{5}{48}\frac{1}{(w-u_1)^2}-\frac{5}{48}\frac{1}{(\bar w-u_1)^2}-\frac{2}{3}\frac{1}{u_3-u_1}+\frac{2}{u_2-u_1}\partial_{u_2}+\\
\frac{2}{u_3-u_1}\partial_{u_3}+
3\partial{u_1}^2  +\frac{2}{w-u_1}\partial_w +\frac{2}{\bar w-u_1}\partial_{\bar w}.
\end{aligned}
\end{equation}
\end{comment}

%Equations \eqref{FCequation} and \eqref{FCequation2} are exactly the equations (A.4, A.5) from \cite{SZK09}. 
The equation \eqref{FCequation} is exactly the equation (A.5) from \cite{SZK09}. 
Following \cite{SZK09}
we can transfer this equation into the semi-infinite strip (keeping covariance in mind) where after the right ansatz one can separate variables and find a solution to the equation (\ref{FCequation}):
$$
G(x,y):=
$$
\begin{equation}
\label{F}
\sinh(\pi x)^{-1/3}e^{\pi x/3}\,_2 F_1\br{-\frac{1}{2},-\frac{1}{3},\frac{7}{6},e^{-2\pi x}}\br{\frac{\sinh(\pi x)^2\sin(\pi y)^2}{\sinh(\pi x)^2+\sin(\pi y)^2}}^{11/96},
\end{equation}
where $x$ and $y$ are real and imaginary parts of the image of $w$ under the conformal transformation which maps the half-plane with marked point $u_1$, $u_2$ and $u_3$ onto the semi infinite strip $\bS=\{0<x<\infty, 0<y<1\}$.

%This solution to $\eqref{FCequation}$ and $\eqref{FCequation2}$ is not unique. 
This solution to $\eqref{FCequation}$ is not unique. 
In order to prove that $G$ indeed gives the correct formula for the right-hand side of (\ref{EqF}), we proceed as follows. Let $\psi_{u_1,u_2,u_3}$ (or $\psi$ for short) be the conformal map that transforms $\H, u_1,u_2,u_3$ to $\bS, i,0,\infty$. If we repeat the above It\^o computations with 
$$
H(u_1,u_2,u_3,w):=K_5|\psi'_{u_1,u_2,u_3}(w)|^{5/48}G(\Re\psi_{u_1,u_2,u_3}(w),\Im\psi_{u_1,u_2,u_3}(w))
$$ 
instead of $F$, we find that 
$$
|g_t'(w)|^{5/48}H(u_1(t),u_2(t),u_3(t),w(t))
$$
is a local martingale for the $SLE(6,2,-2)$ process.  We should now check that (for a right choice of the constant $K_5$) it has a correct value at the stopping time $\tau$, that is  
$$
\lim\limits_{t\rightarrow\tau}|g_t'(w)|^{5/48}F(u_1(t),u_2(t),u_3(t),w(t))=
$$
\begin{equation}
\label{Bdry_2}
\lim\limits_{t\rightarrow\tau}|g_t'(w)|^{5/48}H(u_1(t),u_2(t),u_3(t),w(t))
\end{equation}
Since $H$ is an explicit smooth function, all the It\^o computations are justified. We will finish the proof by checking uniform integrability and applying optional stopping theorem to $|g_t'(w)|^{5/48}(H-F)$ to conclude that it is identically zero.
 
There are three possible stopping scenarios (see Fig. 1). Consider them separately. In all cases $\phi_t$ denotes a conformal map from $\H\backslash \gamma_t$ to the unit disc with $\phi_t(w)=0$.

{\bf Case 1:} The curve swallows $w$ before hitting $u_3$ (i.e. $\tau=T(w)$) and does that by closing a clockwise loop. In this case the left-hand side of (\ref{Bdry_2}) tends to zero. However, the limit $\lim\limits_{t\rightarrow \tau}|\phi_t'(w)|$ exists and is finite almost surely. We can write $\phi_t=:h_t\circ g_t$, where $h_t(z)=\frac{w_t-z}{z-\overline{w_t}}$. It is hence sufficient to show that 
$$
|h_t'(w(t))|^{-5/48}H(u_1(t),u_2(t),u_3(t),w(t))=(2\Im w_t)^{5/48}H(u_1(t),u_2(t),u_3(t),w(t))
$$
tends to zero as $w(t)$ approaches $u_2$ in such a way that the harmonic measure of $(u_1,u_2)$ seen from $w(t)$ tends to zero. Without loss of generality, we can assume that $u_1,u_2,u_3=0,1,\infty$. Then 
\begin{equation}
\label{fromStrip}
(\psi_1)^{-1}:=(\psi_{0,1,\infty})^{-1}=\frac{\cosh(\pi z)+1}{2};
\end{equation}
and we have the following expansion around 1: 
\begin{equation}
\label{psi_1_p}
|\psi'_1(z)|=\frac{|z-1|^{-1/2}}{\pi}+o(|z-1|^{-1/2}), |z|\rightarrow 1.
\end{equation}
It remains to plug everything into the definition of $H$, taking into account that $|\Re\psi(w(t))|\gg|\Im\psi(w(t))|$, and we are done.
\begin{figure}[t]
\includegraphics[width=0.32\textwidth, trim=3cm 15cm 3cm 0cm, clip=true]{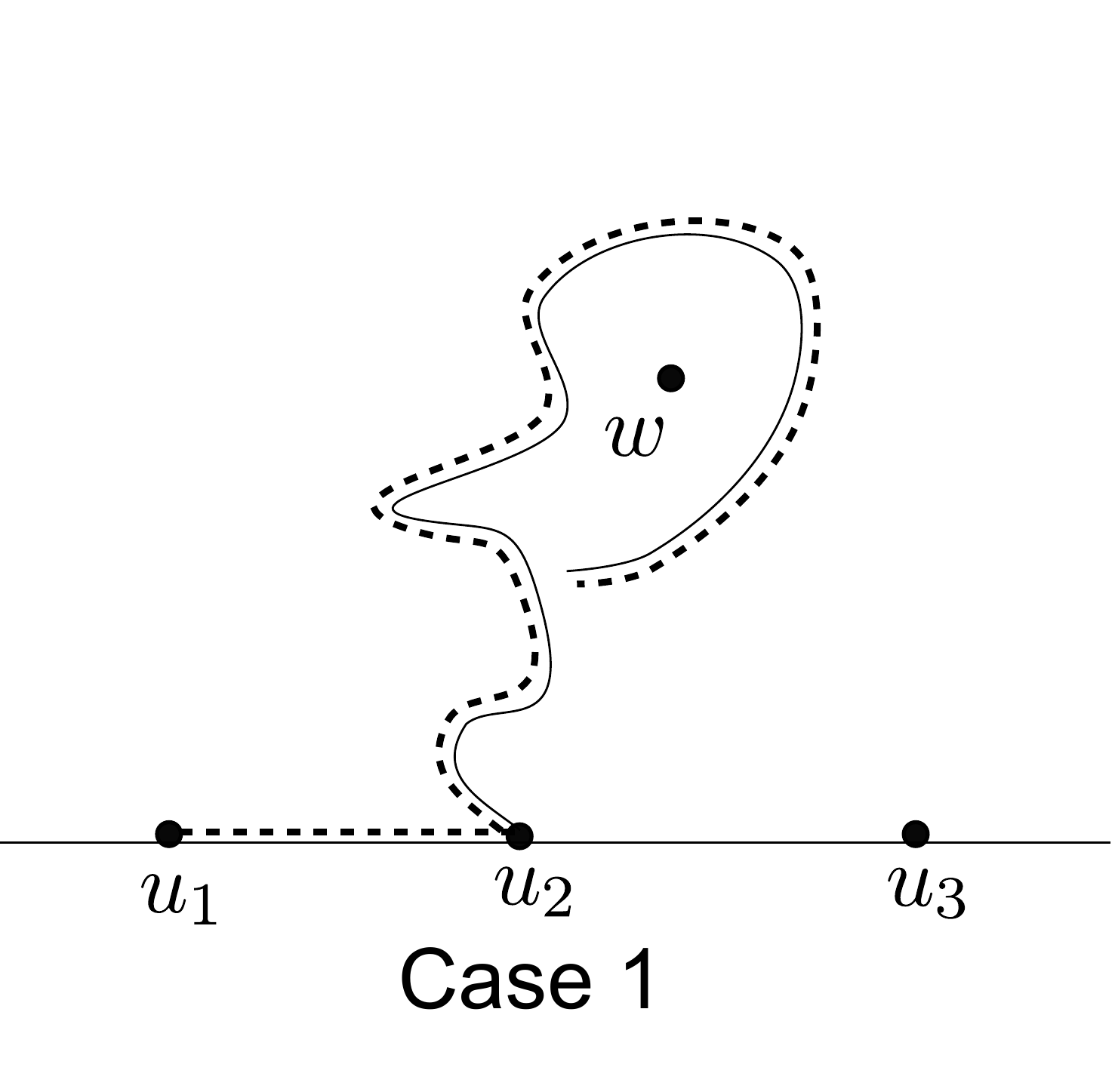}
\includegraphics[width=0.32\textwidth, trim=3cm 15cm 3cm 0cm, clip=true]{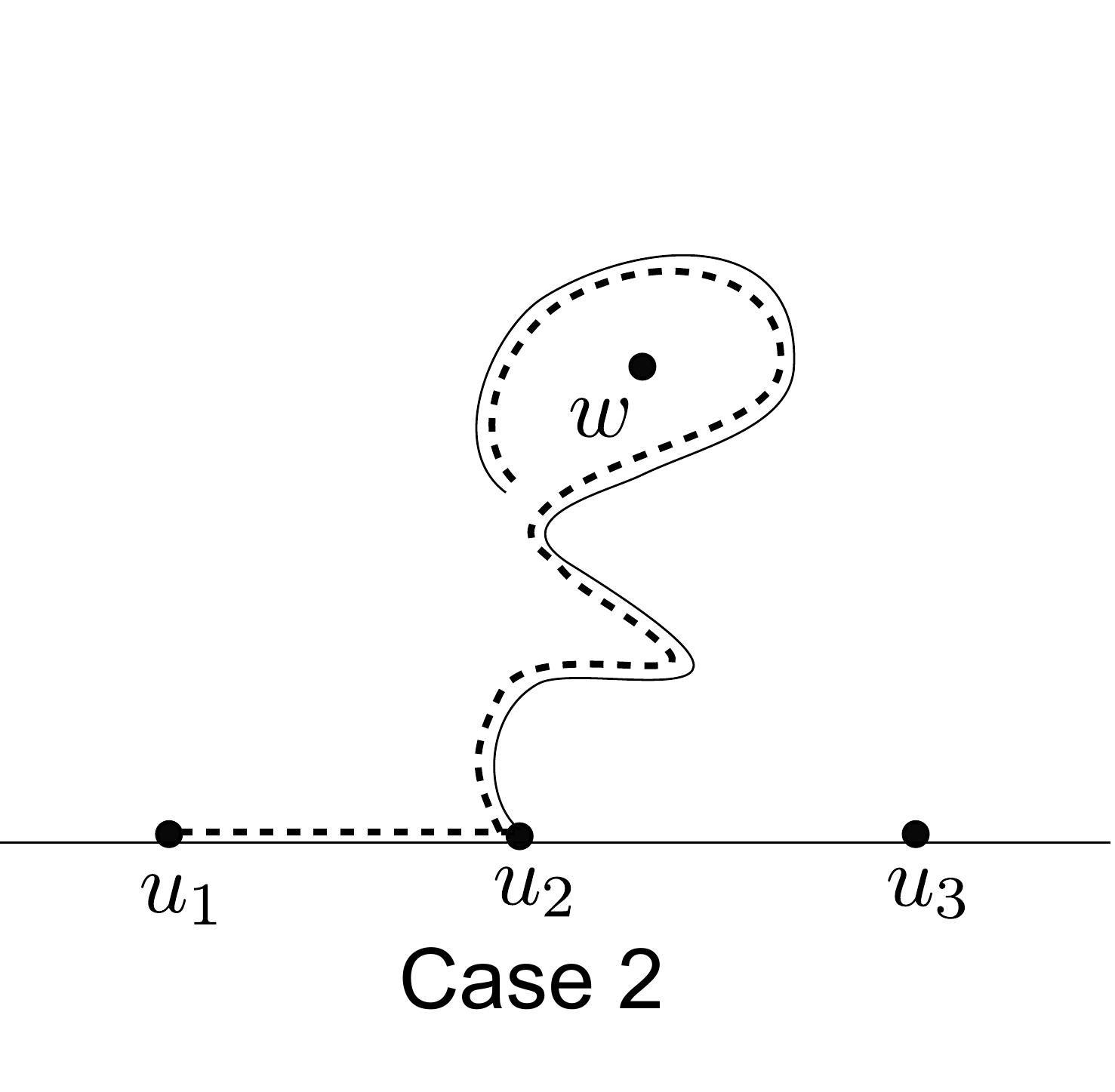}
\includegraphics[width=0.32\textwidth, trim=3cm 15cm 3cm 0cm, clip=true]{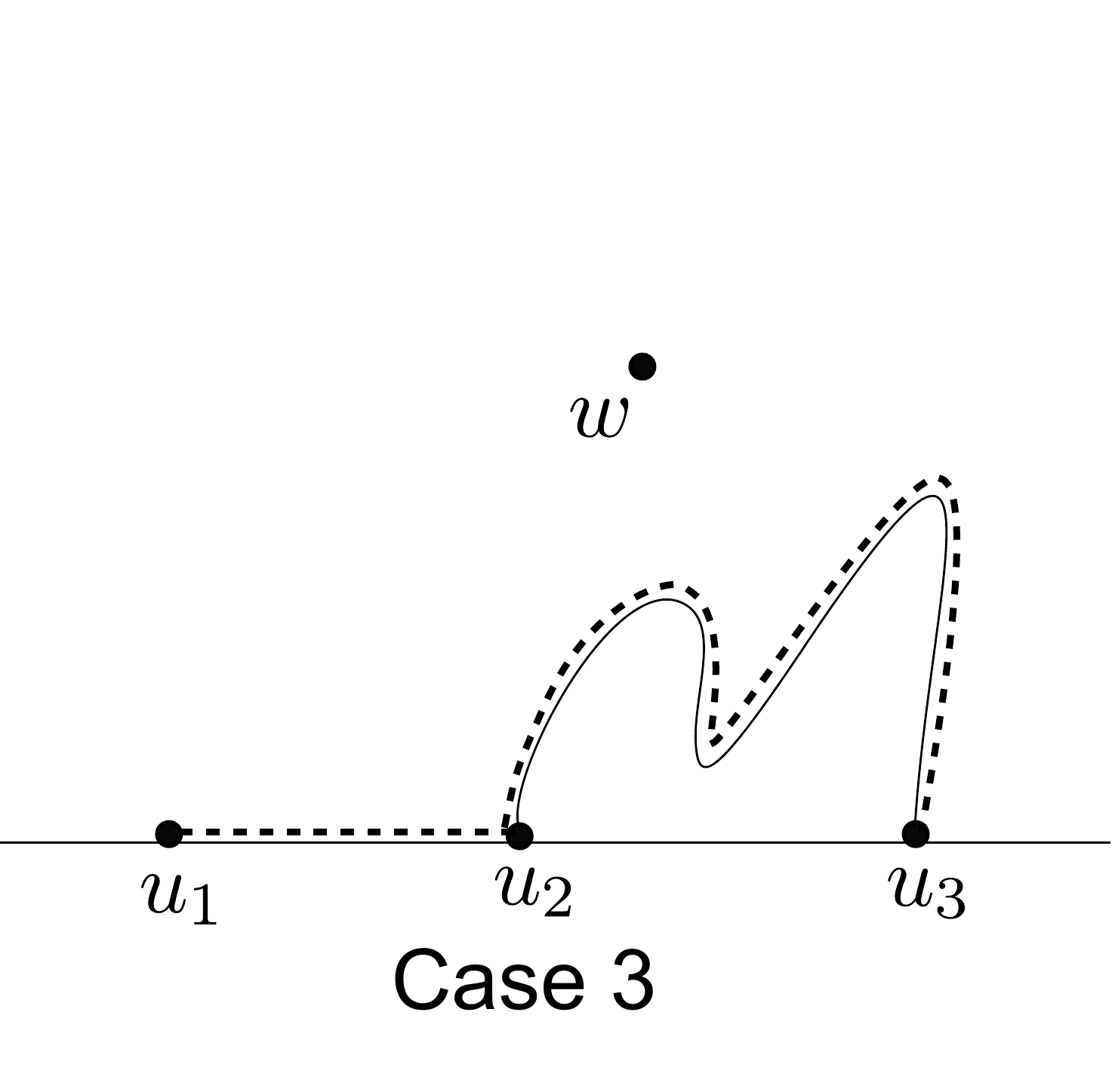}
\caption{Three possible stopping scenarios for the SLE(6,2,-2) interface. {Case 1:} at $t=\tau$ the curve cuts the point $w$ out of the cluster $K$ connected to ($u_1,u_2$). {Case 2:} at $t=\tau$, the whole boundary, as seen from $w$, belongs to $K$. {Case 3:} only the dashed piece of the boundary is explored to be a part of $K$.}
\end{figure}

{\bf Case 2:} The curve swallows $w$ before hitting $u_3$ (i.e. $\tau=T(w)$) and does that by closing a counterclockwise loop. In this case, the left-hand side of (\ref{Bdry_2}) tends to $K_4|\phi'(w)|^{5/48}$. Hence, it is sufficient to prove that 
$$
F(u_1(t),u_2(t),u_3(t),w(t))\sim H(u_1(t),u_2(t),u_3(t),w(t))
$$
as $w(t)$ approaches $u_2$ in such a way that the harmonic mearure of $(u_1,u_2)$ seen from $w(t)$ tends to 1 (that is, $|\Re\psi(w(t))|\ll|\Im\psi(w(t))|$). One has
$$
F(u_1(t),u_2(t),u_3(t),w(t))\sim K_4|h_t'(w(t))|^{5/48}\sim K_4(2\Im w_t)^{-5/48}.
$$
Once again, we can assume without loss of generality that $u_1,u_2,u_3=0,1,\infty$. Then, denoting $x(t)=\Re\psi(w(t))$, $y(t)=\Im\psi(w(t))$, one has
$$
G(x(t),y(t))\sim \,_2 F_1\br{-\frac{1}{2},-\frac{1}{3},\frac{7}{6},1} (\pi x(t))^{-5/48}
$$
and 
$$
|\psi'(w(t))|\sim\frac{|w(t)-1|^{-1/2}}{\pi}\sim\frac{2}{\pi^2}|y(t)|^{-1}. 
$$
Hence 
$$
H(u_1(t),u_2(t),u_3(t),w(t))\sim K_5\,_2 F_1\br{-\frac{1}{2},-\frac{1}{3},\frac{7}{6},1} \left(\frac{2}{\pi^{3}x(t)y(t)}\right)^{5/48}
$$
$$
\sim K_5\,_2 F_1\br{-\frac{1}{2},-\frac{1}{3},\frac{7}{6},1} \left(\frac{2}{\pi}\right)^{5/48}\frac{1}{(2\Im w(t))^{5/48}}
$$

{\bf Case 3: } The curve hits $u_3$ before swalloing $w$ (i.e. $\tau=T(u_3)$). In this case, we should compare the limits of $H$ and $F$ when $u_3\rightarrow u_2$. For $F$, answer is given by the lemma \ref{lemma4pt}
$$
\lim\limits_{u_3\rightarrow u_2}F(u_1,u_2,u_3,w)=
$$
\begin{equation}
\label{FourPt1}
K_4|\phi'(w)|^{5/48}\br{\sin(\pi \omega/2)}^{1/3}=\frac{K_4}{(2\Im w)^{5/48}}\br{\sin(\pi \omega/2)}^{1/3}
\end{equation}
where $\phi$ is a conformal map from the upper half-plane to the unit disc that maps $w$ to the origin, and $\omega$ is the harmonic measure of $(u_1,u_2)$ seen from $w$.

Consider now the limit of $H$. Note that as $u_3\rightarrow u_2$, the image $\psi(w)$ tends to $i$ along some direction. We write $\psi=\psi_1\circ\psi_2$, where $\psi_2$ maps $\H, u_1,u_2,u_3$ to $\H, 0,1,\infty$ and $\psi_1$ maps $\H, 0,1,\infty$ to $\bS, i,0,\infty$. It is convenient to choose $r=|\psi_2(w)|$ as a small parameter. Taking (\ref{fromStrip}) and (\ref{psi_1_p}) into account, one gets, as $u_3\rightarrow u_2$: 
$$
\begin{aligned}
\psi_2(w) = \frac{w-u_1}{w-u_3}\frac{u_2-u_3}{u_2-u_1}=:re^{i\theta}; \\
|\psi_2'(w)| = r\frac{|u_1-u_3|}{|w-u_1||w-u_3|}; \\
x:=\Re \psi(w)=\frac{2}{\pi}r^{\frac{1}{2}}\sin\frac{\theta}{2}+o(r^{\frac{1}{2}})\\
y:=\Im \psi(w)=1-\frac{2}{\pi}r^{\frac{1}{2}}\cos\frac{\theta}{2}+o(r^{\frac{1}{2}})\\
|\psi_1'(\psi_2(w))|=\frac{r^{-\frac{1}{2}}}{\pi}+o(r^{-\frac{1}{2}})\\
\end{aligned}
$$
Plugging everything to the definition of $H$, we find that 
$$
\lim\limits_{u_3\rightarrow u_2}H =K_5\frac{2^{-1/3}}{\pi^{5/48}}\,_2 F_1\br{-\frac{1}{2},-\frac{1}{3},\frac{7}{6},1}\sin(\theta/2)^{-\frac13}\sin(\theta)^{11/48} \left(\frac{|u_1-u_2|}{|w-u_1||w-u_2|}\right)^{5/48}.
$$
Note that $1-\theta/\pi$ is actually equal to the harmonic measure of $u_1,u_2$ seen from $w$. Simple computation now shows that 
$$
\frac{|u_1-u_2|}{|w-u_1||w-u_2|}=\frac{\sin(\theta)}{\Im w}, 
$$
hence
$$
\lim\limits_{u_3\rightarrow u_2}H =K_5\frac{2^{5/48}}{\pi^{5/48}} \,_2 F_1\br{-\frac{1}{2},-\frac{1}{3},\frac{7}{6},1} \frac{(\cos{\theta/2})^{1/3}}{(2\Im w)^{5/48}}.
$$

Combining the results of all three cases, we see that the equation (\ref{Bdry_2}) is satisfied if we choose
\begin{equation}
\label{valueK}
K_5^{-1}=\frac{2^{5/48}}{K_4\pi^{5/48}} \,_2 F_1\br{-\frac{1}{2},-\frac{1}{3},\frac{7}{6},1} 
\end{equation}
The only remaining part is applicability of the optional stopping theorem to the local martingale $|g_t'(w)|^{5/48}(F-H)$. Note first of all that there exists a constant $M$ such that 
$$
|g_t'(w)|^{5/48}F\leq M|\phi_t'(w)|^{5/48},
$$ 
since the right-hand side is proportional to the probability that $w$ is connected to the boundary of the domain. A direct computation shows the same bound holds for $H$:
$$
|g_t'(w)|^{5/48}H\leq M|\phi_t'(w)|^{5/48}.
$$
Hence it is sufficient to prove that $|\phi_t'(w)|^{5/48}$ is uniformly integrable. Since this is monotone increasing in $t$, it is sufficient to show that $\E|\phi'_\tau(w)|^{5/48}<\infty$. This bound follows from known estimates on the dimension of $SLE_6$ (see e. g. \cite{RoSch}, lemma 6.3), concluding the proof of Theorem \ref{ThmF}.

%This solution is not unique, there is a $4$-parametric family of solutions. The correct solution can be identified by %considering the blow up rate at infinity.

\section{A Factorization Formula}
\label{Factorization}
In this section we prove the factorization formula obtained by Simmons, Kleban and Ziff in \cite{SKZ}. We introduce the following correlation functions:
$$
\begin{aligned}
&P_2(u_1,u_3):=\lim\limits_{u_2\rightarrow u_1}C(u_1,u_2,u_3)\left|\frac{u_2-u_1}{2}\right|^{-1/3}\\
&F(u_1,u_2,w):=\lim\limits_{r\rightarrow \infty}F(u_1,u_2,w,r)e^{5/48 r}\\
&P_3(u_1,w):=\lim\limits_{u_2\rightarrow u_1}F(u_1,u_2,w)\left|\frac{u_2-u_1}{2}\right|^{-1/3}\\
&P_{4}(u_1,u_3,w):=\lim\limits_{u_2\rightarrow u_1}C(u_1,u_2,u_3)F(u_1,u_2,u_3,w)\left|\frac{u_2-u_1}{2}\right|^{-1/3}
\end{aligned}
$$
\begin{proposition}
One has the following factorization formula:
\begin{equation}
\label{EqFact}
P^2_4(u_1,u_3,w)=K_F P_3(u_1,w)P_3(u_2,w)P_2(u_1,u_3), 
\end{equation}
where
$$
K_F=\frac{K_5^2K_3 2\, 2^{5/24}}{K_4^{2}\pi^{5/24}}=\frac{2\sqrt{\pi}}{\,_2 F^2_1\br{-\frac{1}{2},-\frac{1}{3},\frac{7}{6},1}\Gamma(\frac13)\Gamma(\frac76)}=\frac{2^7\pi^5}{3^{3/2}\Gamma(\frac13)^9}
$$
\end{proposition}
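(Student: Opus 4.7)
The plan is to prove \eqref{EqFact} by explicit computation: evaluate each of the four correlation functions appearing in the formula in closed form using results already established in the paper, then verify the claimed identity. (The expression $P_3(u_2,w)$ in the statement appears to be a typographical slip for $P_3(u_3,w)$, since $u_2$ is not among the arguments of $P_4$ on the left-hand side.)

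The easy half of the calculation is to determine $P_2(u_1,u_3)$ and $P_3(u_\ast,w)$ for $u_\ast\in\{u_1,u_3\}$ directly from Lemmas \ref{lemma3pt} and \ref{lemma4pt}. For $P_3$, I would first substitute the explicit formula $|\phi'(w)|=(2\Im w)^{-1}$ for the standard map $\phi:\H\to\D$ with $\phi(w)=0$, and then use the Poisson-kernel asymptotic $\omega\sim(u_2-u_1)\Im w/(\pi|w-u_1|^2)$ for the harmonic measure of a small boundary interval. This yields a closed-form $P_3(u_\ast,w)=K_4(2\Im w)^{-5/48}(\Im w/|w-u_\ast|^2)^{1/3}$, and similarly $P_2(u_1,u_3)=2^{1/3}K_3(u_3-u_1)^{-2/3}$.

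The main step is computing $P_4(u_1,u_3,w)$ via Theorem \ref{ThmF}. As $u_2\to u_1$, the M\"obius part $\psi_2(w)=\frac{(w-u_1)(u_2-u_3)}{(w-u_3)(u_2-u_1)}$ of the conformal map $\psi=\psi_1\circ\psi_2$ tends to infinity with bounded argument, and using \eqref{fromStrip} one has $\psi_1(Z)\approx\frac{1}{\pi}\log(4Z)$ with $|\psi_1'(Z)|\approx 1/(\pi|Z|)$ for large $|Z|$ in $\H$. Consequently $\Re\psi(w)\to\infty$ while $\Im\psi(w)\to\arg(\psi_2(w))/\pi\in(0,1)$. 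Inspecting \eqref{F} shows $G(x,y)\to 2^{1/3}\sin(\pi y)^{11/48}$ as $x\to\infty$, since the $\,_2F_1$ factor tends to $1$ and $\sinh(\pi x)^{-1/3}e^{\pi x/3}\to 2^{1/3}$. Combining these ingredients with elementary formulas for $|\psi_2'(w)|$, $|\psi_2(w)|$ and $\Im\psi_2(w)$, the expression $K_5|\psi'(w)|^{5/48}G(\Re\psi,\Im\psi)$ has a finite nonzero limit; multiplying by $C(u_1,u_2,u_3)|(u_2-u_1)/2|^{-1/3}\to 2^{1/3}K_3(u_3-u_1)^{-2/3}$ from Lemma \ref{lemma3pt} yields
$$P_4(u_1,u_3,w)=2^{2/3}K_3K_5\pi^{-5/48}(\Im w)^{11/48}|u_1-u_3|^{-1/3}(|w-u_1||w-u_3|)^{-1/3}.$$

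Finally, substituting the four closed-form expressions into \eqref{EqFact}, the powers of $\Im w$, $|w-u_1|$, $|w-u_3|$ and $|u_1-u_3|$ balance automatically---a useful consistency check for the bulk exponent $5/48$ and the boundary exponent $1/3$---and a short numerical computation recovers the ratio $K_3K_5^2\,2^{29/24}/(K_4^2\pi^{5/24})$, which equals the stated value $K_F$ since $2^{29/24}=2\cdot 2^{5/24}$. The main obstacle is the asymptotic analysis of $\psi_1\circ\psi_2$ in the third step, requiring careful tracking of both the M\"obius degeneration as $u_2\to u_1$ and the behavior at infinity of the hypergeometric factor in $G$; everything after that is elementary bookkeeping.
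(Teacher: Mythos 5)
Your proposal is correct and follows essentially the same route as the paper: extract $P_2$ and $P_3$ from Lemmas \ref{lemma3pt} and \ref{lemma4pt}, compute $\lim_{u_2\to u_1}F(u_1,u_2,u_3,w)$ via the degeneration of $\psi=\psi_1\circ\psi_2$ and the limit $G(x,y)\to 2^{1/3}\sin(\pi y)^{11/48}$ as $x\to\infty$, and then match constants; your closed forms for $P_3$, $P_4$ and the resulting $K_F=2\cdot 2^{5/24}K_3K_5^2/(K_4^2\pi^{5/24})$ agree with the paper's (\ref{EqP2})--(\ref{EqP4}). Your reading of $P_3(u_2,w)$ as $P_3(u_3,w)$ is also the intended one.
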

\begin{proof}
The proposition is proven by direct computation. One gets immediately from lemma \ref{lemma3pt} that 
\begin{equation}
\label{EqP2}
P_2(u_1,u_3)=K_3\frac{2^{1/3}}{(u_1-u_3)^{2/3}}
\end{equation}
In the notatin of lemma \ref{lemma4pt}, one has 
$$
\pi\omega\sim\frac{|u_1-u_2|\Im w}{|w-u_1|^2},\quad u_2\rightarrow u_1,
$$
and $|\phi'(w)|=\frac{1}{2\Im w}$, hence

\begin{equation}
\label{EqP3}
P_3(u_1,w)=\frac{K_4}{2^{5/48}}\frac{(\Im w)^{11/48}}{|u_1-w|^{2/3}}.
\end{equation}
It remains to figure out the limit of $F(u_1,u_2,u_3,w)$ as $u_2\rightarrow u_1$. Note that in this case the image $\psi_{u_1,u_2,u_3}(w)$ of $w$ under the mapping to the strip tends to infinity. We can write, as before,  $\psi_{u_1,u_2,u_3}=\psi_1\circ\psi_2$, where $\psi_2$ maps $\H, u_1,u_2,u_3$ to $\H, 0,1,\infty$ and 
$$
\psi_1^{-1}(z)=\frac{\cosh(\pi z)+1}{2}.
$$ 
Then we have 
$$
\begin{aligned}
\psi_2(w)\sim -\frac{(w-u_1)(u_3-u_1)}{(w-u_3)|u_2-u_1|}\\
\psi_2'(w)\sim \frac{(u_3-u_1)^2}{(w-u_3)^2|u_1-u_2|}.
\end{aligned}
$$
Writing $x=\Re{\psi_{u_1,u_2,u_3}(w)}$ and $y=\Im{\psi_{u_1,u_2,u_3}(w)}$, we obtain
$$
2\psi_2(w)\sim e^{\pi x}\frac{\cos(\pi y)+i\sin(\pi y)}{2},
$$
hence
$$
\begin{aligned}
&e^{\pi x} \sim |4\psi_2(w)|\sim \frac{4|w-u_1|(u_3-u_1)}{|w-u_3||u_2-u_1|},\\
&\pi y \sim \arg \psi_2(w)=\arg \frac{-w+u_1}{w-u_3}=\pi-\zeta,\\
&\psi_1'(\psi_2(w))\sim\frac{4}{\pi e^{\pi x}}\sim \frac{|w-u_3||u_2-u_1|}{\pi|w-u_1||u_3-u_1|}
\end{aligned}
$$
where $\zeta$ is the angle at $w$ in the triangle $(u_1,u_3,w)$. Plugging everything into (\ref{EqF}), we get that 
\begin{equation}
\label{EqP4}
\lim\limits_{u_2\rightarrow u_1}F(u_1,u_2,u_3,w)= \frac{K_5 2^{1/3}}{\pi^{5/48}}(\sin{\zeta})^{1/3}\Im w^{-5/48}
\end{equation}
The proposition now follows from (\ref{EqP2}), (\ref{EqP3}) and (\ref{EqP4}).
\end{proof}
\begin{rem}
The proposition has the following probabilistic interpretation. Let $P^{\varepsilon}_2(u_1,u_2)$, $P^{\varepsilon}_3(u,w)$ and  $P^{\varepsilon}_4(u_1,u_2,w)$ be the probabilities that $\varepsilon$-neighborhoods of corresponding points are connected by a percolation cluster (we understand $\varepsilon$-neighborhoods in the sense of $r(u_i)$ and $r(w)$, as defined in the introduction). Then (\ref{EqFact}) holds with $P_i$ replaced by $P_i^{\varepsilon}$ and equality replaced by equivalence as $\varepsilon \rightarrow 0$. Indeed, by definition $P_i^{\varepsilon}\sim P_i\varepsilon^{\sigma_i}$ with an appropriate $\sigma_i$. It is immediate to see that these power factors cancel out once plugged into (\ref{EqFact}).
\end{rem}
\begin{rem}
The constants $K_3,K_4,K_5$ in this paper are non-universal, in particular, they depend on our definition of neighborhoods of points (and in the lattice formulation they would depend on the lattice). The constant $K_F$, however, is conjectured to be universal, as all non-universal parts cancel out.
\end{rem}

\textbf{Acknowledgements.} We are grateful to Stanislav Smirnov for introducing us to the subject and many encouraging discussions, to Dmitry Chelkak for his explanations concerning the proof of Lemma \ref{lemma4pt}, to Kalle Kyt\"{o}l\"{a} for many useful discussions, and to Peter Kleban for useful discussion and proofreading the first version of the paper. Beliaev is partially supported by NSF grant DMS-0758492 and by Swiss National Science Foundation. Izyurov is partially supported by Swiss National Science Foundation, ERC AG CONFRA, EU RTN CODY and the Chebyshev Laboratory (Faculty of Mathematics and Mechanics, St Petersburg State University) under the grant 11.G34.31.0026 of the Government of the Russian Federation. 

\bibliography{sle} 
\bibliographystyle{abbrv}

\end{document}